\newtheorem{thm}{Theorem}[section]
\newtheorem{lem}[thm]{Lemma}
\newtheorem{cor}[thm]{Corollary}
\theoremstyle{definition}
\newtheorem*{rem}{Remark}
\newtheorem*{examps}{Examples}
\def\fph{\mathbb{F}_{\ph}}
\newcommand{\Z}{\mathbb Z}
\newcommand{\z}{\mathbb Z}
\newcommand{\Q}{\mathbb Q}
\newcommand{\F}{\mathbb F}
\newcommand{\fp}{\mathbb F_p}
\def\F{\mathbb{F}}
\newcommand{\p}{\mathfrak{p}}
\def\al{\alpha}
\def\la{\lambda}
\def\th{\theta}
\def\md#1{\ \mbox{\rm(mod }{#1})}
\def\nph#1{N_{\ph}(#1)}
\def\npp#1{N_{\ph}^+(#1)}
\def\ph{\phi}
\newcounter{cs}
\newcommand{\casos}{\begin{itemize}}
\newcommand{\fcasos}{\end{itemize}\setcounter{cs}{1}}
\newfont{\tit}{cmr12 scaled \magstep3}
\begin{document}
\title[]{On monogenity of certain  pure number fields defined by $x^{p^r}-m$}
\textcolor[rgb]{1.00,0.00,0.00}{}
\author{Hamid Ben Yakkou and Lhoussain El Fadil}
\address{Faculty of Sciences Dhar El Mahraz, P.O. Box  1794 Atlas-Fez , Sidi Mohamed Ben Abdellah University, Fez-- Morocco}\email{beyakouhamid@gmail.com\\lhouelfadil2@gmail.com}
\keywords{ Power integral basis, Theorem of Ore, prime ideal factorization} \subjclass[2010]{11R04,
11R16, 11R21}
\date{\today}
\maketitle
\begin{abstract}  
 Let $K = \mathbb{Q} (\alpha) $ be a pure number field generated by   a complex root $\alpha$  a monic irreducible polynomial  $ F(x) = x^{p^r} -m$, with $ m \neq 1 $  is a square free rational integer, $p$ is a rational prime integer,  and $r$ is a positive  integer.  In this paper, we study the monogenity of $K$. We prove that if {{$\nu_p(m^p-m)=1$}}, then $K$ is monogenic. But if $r\ge p$ and    {$\nu_p(m^{p}-m)> p$}, then $K$ is not monogenic.  Some illustrating examples are given.
\end{abstract}

\section{Introduction} 
Let $K$ be a number field defined by a monic irreducible polynomial $f(x)\in
  \Z[x]$ and $\Z_K$  its ring of integers.
  It is well know that  the ring $\Z_K$ is a free $\Z$-module of rank $n=[K:\Q]$. Thus the abelian group $\Z_K/\Z[\al]$ is finite. Its cardinal order is called the index of $\Z[\al]$, and denoted $(\Z_K: \Z[\al])$.
 Kummer showed  that for a rational prime integer, if $p$ does not divide $(\mathbb{Z}_K : \mathbb{Z}[\alpha])$, then the factorization of  $ p \mathbb{Z}_K$ can be derived directly  from the factorization of $\overline{F(x)}$ in $\F_p[x]$.  In $1878$, Dedekind  gave a criterion  to tests wither  $p$ divides or not  $(\Z_K: \Z[\al])$ (see  \cite[Theorem 6.1.4]{Co} and \cite{R}).
The index $ i(K) $ of the field $K$ is $ i(K) = \ gcd \ \{ ind (\theta) \ | \ \theta \in \mathbb{Z}_K \mbox{ generates } K \ \} $. A rational prime integer $p$ dividing $ i(K)$ is called a prime common index divisor of $K$.
  The ring $\Z_K$ is said to have a power integral basis if it has a $\Z$-basis $(1,\th,\cdots,\th^{n-1})$ for some $\th\in \Z_K$. That means $\Z_K=\Z[\th]$ or $\Z_K$ is mono-generated as a ring, with a single generator $\th$. In such  a case, the field $K$ is said to
be monogenic and not monogenic otherwise. 
 The problem of checking the monogenity of the field $K$ is  called a problem of Hasse \cite{F4, G19, Ha, He,  MNS}. 
The  problem of testing the monogenity  of number fields  and constructing  power integral bases have been intensively studied, this last century, mainly  by  Ga\'al, Nakahara, Peth\"o, and their research groups  (see for instance \cite{ AN,G, GO, P}). 
 { In \cite{E07},   El Fadil  gave conditions for the existence of power integral bases of pure cubic fields in terms of the index form equation. In \cite{F4}, Funakura, studied the integral basis in pure quartic fields.  In \cite{GR4},  Ga\'al and  Remete, calculated the elements of index $1$ {(with coefficients with absolute value $<10^{1000}$ in the integral basis)} of pure quartic fields generated by $m^{\frac{1}{4}}$ for {$1< m <10^7$}  and $m\equiv 2,3 \md4$.  In \cite{AN6}, Ahmad, Nakahara, and Husnine proved  that  if $m\equiv 2,3 \md4$ and  $m\not\equiv \mp1\md9$, then the sextic number field generated by $m^{\frac{1}{6}}$ is monogenic.
They also showed in \cite{AN},    that if $m\equiv 1 \md4$ and $m\not\equiv \mp1\md9$, then the sextic number field generated by $m^{\frac{1}{6}}$ is not monogenic.  
{Also, Hameed and Nakahara \cite{HN8}, proved that if $m\equiv 1\md4$, then the
octic number field generated by $m^{1/8}$ is not monogenic, but if $m\equiv 2,3 \md4$, 
then it is monogenic.}   In \cite{GR17}, Ga\'al  and  Remete, by applying the  explicit form of the index forms, they obtained new results on  monogenity of the number  fields generated  by $m^{\frac{1}{n}}$, where $3\le n\le 9$. } While Ga\'al's and Peth\"o's techniques are based on the index calculation,  Nakahara's methods are based on the existence of power relative integral bases of some special sub-fields. The goal of this paper is to study the monogenity of pure number fields of degree $p^n$ for every rational prime integer $p$ and $n$  is a natural integer. Our proposed results generalize  \cite[Theorem 2.1]{CNS}, where pure number fields of degree $2^n$ are previously studied. Our method is based on prime ideal factorization and is similar to that used in \cite{E6,E12, E24}. 
\section{Main results}
Let $K$ be the number field defined by a complex root $\al$ of   a monic  irreducible polynomial $F(x)=x^{p^r}-m$, with $m\neq 1$ is a square free rational integer.
\begin{thm}\label{pib}
 If $\nu_p(m^{p}-m)=1$, then $\Z[\al]$ is the ring of integers of $K${; $K$ is monogenic and $\al$ generates a power integral basis}.
\end{thm}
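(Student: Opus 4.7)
The plan is to show that $(\Z_K:\Z[\al])=1$ by ruling out every rational prime that could divide this index. A direct computation for trinomial-type polynomials gives $\operatorname{disc}(F) = \pm p^{rp^r}\, m^{p^r - 1}$, and since $(\Z_K:\Z[\al])^2$ divides $\operatorname{disc}(F)$, any prime $q$ contributing to the index must divide $pm$. I would handle the primes $q \mid m$ with $q \neq p$ first, and then the prime $p$ itself.

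For $q \mid m$ with $q \neq p$, I apply Dedekind's criterion at $q$. Since $q \mid m$ we have $\overline{F}(x) = x^{p^r}$ in $\F_q[x]$, with unique repeated irreducible factor $\bar g = \bar x$. Taking the monic lift $g(x) = x$, one gets $F(x) - g(x)^{p^r} = -m$, hence $M(x) := (F - g^{p^r})/q = -m/q$. Since $m$ is squarefree, $-m/q$ is a unit in $\F_q$, so $\gcd(\bar g,\bar M) = 1$ and $q$ does not divide $(\Z_K:\Z[\al])$.

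For $p$, two cases arise. If $p \mid m$, then $\nu_p(m) = 1$ by squarefreeness and the same computation with $g(x) = x$ gives $M = -m/p$, a unit modulo $p$, so $p \nmid (\Z_K:\Z[\al])$. If $p \nmid m$, Fermat's little theorem gives $\bar m^{p^r} = \bar m$ in $\F_p$; iterating Frobenius shows $\overline{F}(x) = (x - \bar m)^{p^r}$. Dedekind's criterion then uses $g(x) = x - m$, and since $(x-m)^{p^r} \equiv x^{p^r} - m^{p^r} \pmod p$ one computes $M(m) = (m^{p^r}-m)/p$. The criterion thus reduces to the single arithmetic statement $\nu_p(m^{p^r} - m) = 1$.

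The main obstacle will be promoting the hypothesis $\nu_p(m^p - m) = 1$ to $\nu_p(m^{p^r} - m) = 1$. Using $p \nmid m$, this amounts to showing $\nu_p(m^{p^r - 1} - 1) = \nu_p(m^{p - 1} - 1)$. For odd $p$, let $d$ be the multiplicative order of $m$ modulo $p$; since $d \mid p - 1 \mid p^r - 1$ and both $(p-1)/d$ and $(p^r-1)/d$ are coprime to $p$, the lifting-the-exponent lemma yields $\nu_p(m^{p-1} - 1) = \nu_p(m^d - 1) = \nu_p(m^{p^r - 1} - 1)$. For $p = 2$ the hypothesis forces $m \equiv 3 \md 4$, and since $2^r - 1$ is odd we get $m^{2^r - 1} \equiv 3 \md 4$, hence $\nu_2(m^{2^r - 1} - 1) = 1$ directly. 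Combining all cases gives $(\Z_K:\Z[\al]) = 1$, so $\Z_K = \Z[\al]$ and $(1, \al, \ldots, \al^{p^r - 1})$ is a power integral basis.
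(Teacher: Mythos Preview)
Your argument is correct and complete. The route, however, differs from the paper's. Where you invoke Dedekind's criterion directly, the paper uses the $\phi$-Newton polygon and Ore's theorem: for $q\mid m$ it takes $\phi=x$ and observes that $N_\phi^+(F)$ is a single side of height~$1$ (since $m$ is squarefree), and for $p\nmid m$ it takes $\phi=x-m$, expands $F(x)=\phi^{p^r}+\sum_{j=1}^{p^r-1}\binom{p^r}{j}m^{p^r-j}\phi^j+(m^{p^r}-m)$, and again reads off a single side of height~$1$ when the constant term has $p$-adic valuation~$1$. Both approaches land on the same arithmetic condition $\nu_p(m^{p^r}-m)=1$; your Dedekind computation is the more elementary way to reach it, while the paper's Newton-polygon setup is the same machinery it later reuses to count prime ideals in the non-monogenic theorems. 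One point where your write-up is actually tighter than the paper's: the promotion of the hypothesis $\nu_p(m^{p}-m)=1$ to $\nu_p(m^{p^r}-m)=1$ is only implicit in the paper's proof of this theorem (a related factorization appears later, in the proof of Theorem~\ref{npibodd}), whereas you supply it explicitly via lifting-the-exponent for odd~$p$ and a direct $\bmod\ 4$ check for $p=2$.
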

\begin{cor}
For $p=2$, if $m\equiv 2$ or $3$, then $\Z[\al]$ is integrally closed for every natural integer $r$.
 So that Theorem \ref{pib} generalizes  \cite[Theorem 2.1]{CNS}, where $p=2$ is  previously considered.
\end{cor}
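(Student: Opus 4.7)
The plan is to obtain the corollary as a direct specialization of Theorem~\ref{pib} at $p=2$. The only thing to verify is that the arithmetic hypothesis $\nu_p(m^p-m)=1$ of Theorem~\ref{pib}, when $p=2$, is equivalent under the standing square-free assumption on $m$ (with $m\neq 1$) to the congruence condition $m\equiv 2$ or $3\pmod 4$ stated in the corollary. Once this equivalence is established, the conclusion $\mathbb{Z}_K=\mathbb{Z}[\alpha]$ for every natural integer $r$ is immediate.

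To carry this out, I would first factor $m^2-m=m(m-1)$ and use that $m$ and $m-1$ are coprime consecutive integers, so $\nu_2(m^2-m)$ equals $\nu_2$ of whichever of the two factors is even. A short case analysis on $m\bmod 4$ then finishes the verification: if $m\equiv 2\pmod 4$ then $\nu_2(m)=1$ and $m-1$ is odd, hence $\nu_2(m^2-m)=1$; if $m\equiv 3\pmod 4$ then $m$ is odd and $\nu_2(m-1)=1$, again giving $\nu_2(m^2-m)=1$. With the hypothesis of Theorem~\ref{pib} verified, the theorem applies and yields $\mathbb{Z}_K=\mathbb{Z}[\alpha]$, proving the first assertion of the corollary.

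The second sentence is then a simple bookkeeping remark: since the pair of conditions ``$p=2$ and $m\equiv 2,3\pmod 4$'' is exactly the setting of \cite[Theorem~2.1]{CNS}, the computation above shows that Theorem~\ref{pib} recovers that result as the special case $p=2$ and extends it uniformly to all primes $p$. There is essentially no obstacle; the one point worth flagging is that the square-free hypothesis on $m$ automatically rules out $m\equiv 0\pmod 4$, so that the only residue modulo $4$ not covered by the corollary is $m\equiv 1\pmod 4$, and this case is correctly excluded because it forces $\nu_2(m-1)\geq 2$ and hence $\nu_2(m^2-m)\geq 2$, putting it outside the scope of Theorem~\ref{pib}.
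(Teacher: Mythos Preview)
Your proposal is correct and is precisely the intended argument: the paper states this as an immediate corollary of Theorem~\ref{pib}, and the only content is the verification that $m\equiv 2$ or $3\pmod 4$ forces $\nu_2(m^2-m)=\nu_2(m(m-1))=1$, which you carry out cleanly.
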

\begin{thm}\label{npibodd}
 Assume that  $p$ is an odd prime integer and $p$ does not divide $m$. If {{$\nu_p(m^{p-1}-1)>p$ }}  and $r\ge p$, then $K$ is not monogenic.  
\end{thm}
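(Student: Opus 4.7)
The strategy is to prove that $p$ is a common index divisor of $K$, which instantly rules out monogenity: for any $\theta\in\mathcal{O}_K$ generating $K$ one would have $p\mid(\mathcal{O}_K:\mathbb{Z}[\theta])$, so $\mathcal{O}_K\neq\mathbb{Z}[\theta]$. Concretely, I will show that $\mathcal{O}_K$ admits at least $p+1$ prime ideals above $p$ of residue degree one, which strictly exceeds the $p$ monic polynomials of degree one in $\mathbb{F}_p[x]$; the classical common-index-divisor criterion then yields $p\mid i(K)$.

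The principal tool is Ore's theorem on $\phi$-Newton polygons, in the spirit of \cite{E6,E12,E24}. Since $p\nmid m$, Fermat's little theorem gives $\overline{F(x)}=(x-\overline{m})^{p^r}$ in $\mathbb{F}_p[x]$, so I would take $\phi(x)=x-m\in\mathbb{Z}[x]$. The $\phi$-adic expansion
\[
F(x)=(m^{p^r}-m)+\sum_{k=1}^{p^r}\binom{p^r}{k}m^{p^r-k}(x-m)^k
\]
yields $\nu_p(a_k)=r-\nu_p(k)$ for $k\ge 1$. Writing $m^{p-1}=1+u$ with $s:=\nu_p(u)>p$ and $t:=(p^r-1)/(p-1)\equiv 1\pmod p$, the expansion $(1+u)^t-1=tu+\binom{t}{2}u^2+\cdots$ shows that $\nu_p(a_0)=\nu_p(m^{p^r}-m)=s>p$.

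Next I would analyze the $\phi$-Newton polygon $N_\phi(F)$, whose candidate vertices lie in $\{(0,s),(1,r),(p,r-1),\dots,(p^r,0)\}$. A short case analysis splits at $s\ge r+1$, where all these points are vertices and the polygon has $r+1$ sides, versus $p<s\le r$, where the polygon starts with the single long side $(0,s)\to(p^{r-s+1},s-1)$ and then continues along the natural chain $(p^{r-s+k},s-k)\to(p^{r-s+k+1},s-k-1)$ for $1\le k\le s-1$. The key arithmetic fact identifying the first vertex in the second case is that $k\mapsto k/p^k$ is maximized at $k=1$ for every odd prime $p$. In all situations, the number of sides equals $\min(s,r+1)\ge p+1$, and every side $S$ has horizontal length $l_S$ and slope $-h_S/e_S$ in lowest terms with $l_S/e_S=1$; hence the residual polynomial attached to $S$ is linear, automatically irreducible and separable, and $F$ is $\phi$-regular.

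By Ore's theorem each side $S$ then contributes exactly one prime $\mathfrak{p}_S$ of $\mathcal{O}_K$ above $p$ of residue degree $\deg\phi\cdot\deg R_S=1$. Consequently $\mathcal{O}_K$ has at least $p+1$ primes of residue degree one above $p$, establishing that $p$ is a common index divisor and finishing the proof. The main obstacle is the Newton-polygon case $p<s\le r$: the first vertex could a priori be at $(p^i,r-i)$ for various $i$, and confirming that the correct $i$ is $r-s+1$ relies on the optimization of $k/p^k$ mentioned above; once this is secured, the remaining convex-hull structure and residual-polynomial computations are routine.
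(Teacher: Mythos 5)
Your proposal is correct and follows essentially the same route as the paper: the $\phi$-expansion with $\phi=x-m$, the binomial-coefficient valuations $\nu_p(a_k)=r-\nu_p(k)$, the count of $\min(s,r+1)\ge p+1$ degree-one sides of the $\phi$-Newton polygon, and the conclusion via Ore's theorem that $P_1\ge p+1>p=N_1$, so $p$ is a common index divisor. Your treatment of $\nu_p(m^{p^r}-m)=\nu_p(m^{p-1}-1)$ and of the first vertex in the case $p<s\le r$ (via maximizing $k/p^k$, which is where oddness of $p$ enters) is in fact spelled out more carefully than in the paper's own argument.
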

\begin{cor}\label{mono3}
 Assume that $p=3$. 
 \begin{enumerate}
 \item
 If $m\not\equiv \mp 1\md9$, then $K$ is  monogenic. 
\item
 If $r\ge 3$ and $m\equiv \mp 1 \md{81}$, then $K$ is not monogenic. 
\end{enumerate}  
\end{cor}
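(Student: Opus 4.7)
The plan is to derive both parts of the corollary as direct specializations of Theorem \ref{pib} and Theorem \ref{npibodd} to $p=3$, so that the entire argument reduces to a $3$-adic valuation computation on the factorization $m^3 - m = m(m-1)(m+1)$.

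For part (1), by Theorem \ref{pib} it suffices to show that $\nu_3(m^3 - m) = 1$ whenever $m$ is squarefree and $m \not\equiv \pm 1 \pmod 9$. Since $m-1,\, m,\, m+1$ are three consecutive integers, exactly one of them is divisible by $3$, so $\nu_3(m^3 - m)$ equals the $3$-adic valuation of that single factor. I would split into the three cases $m \equiv 0, 1, -1 \pmod 3$: if $m \equiv 0 \pmod 3$, squarefreeness forces $\nu_3(m) = 1$ (and $m \pm 1$ are coprime to $3$); if $m \equiv 1 \pmod 3$, the hypothesis $m \not\equiv 1 \pmod 9$ gives $\nu_3(m-1) = 1$; and symmetrically $m \equiv -1 \pmod 3$ combined with $m \not\equiv -1 \pmod 9$ yields $\nu_3(m+1) = 1$. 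In every case $\nu_3(m^3 - m) = 1$, and Theorem \ref{pib} then gives that $K$ is monogenic.

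For part (2), I would apply Theorem \ref{npibodd} with $p=3$. From $m \equiv \pm 1 \pmod{81}$ one immediately sees that $3 \nmid m$, so the hypothesis of Theorem \ref{npibodd} on coprimality is satisfied. Moreover, $m-1$ and $m+1$ differ by $2$, hence cannot both be divisible by $3$: if $m \equiv 1 \pmod{81}$ then $\nu_3(m-1) \ge 4$ and $\nu_3(m+1) = 0$, while if $m \equiv -1 \pmod{81}$ the roles are exchanged. In either subcase $\nu_3(m^{p-1} - 1) = \nu_3(m^2 - 1) \ge 4 > 3 = p$. Combined with the assumption $r \ge 3$, Theorem \ref{npibodd} produces the desired conclusion that $K$ is not monogenic.

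The arithmetic is entirely routine and no real obstacle is expected. The only subtlety worth a brief remark is the passage between the two slightly different valuation conditions that appear in the source theorems, namely $\nu_p(m^p - m)$ in Theorem \ref{pib} and $\nu_p(m^{p-1} - 1)$ in Theorem \ref{npibodd}. This mismatch is harmless in part (2) because the hypothesis forces $3 \nmid m$, so $m$ is a $3$-adic unit and $\nu_3(m^3 - m) = \nu_3(m^2 - 1)$; no additional work is required.
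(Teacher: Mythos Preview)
Your proposal is correct and matches the paper's intended approach: the corollary is stated without its own proof precisely because it is the immediate specialization of Theorem \ref{pib} and Theorem \ref{npibodd} to $p=3$, and your $3$-adic valuation computation on $m^3-m=m(m-1)(m+1)$ and $m^2-1=(m-1)(m+1)$ is exactly the routine verification the reader is expected to supply.
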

\begin{thm}\label{mono2}
 Assume that $p=2$. \begin{enumerate}
 \item
 If $r=2$ and  $m\equiv 1\md{16}$, then the pure quartic number field $K$ is not monogenic. 
\item
 If $r\ge 3$ and  $m\equiv 1\md{32}$, then $K$ is not monogenic. 
\end{enumerate}  
\end{thm}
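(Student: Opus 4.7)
My plan is to show that in either case the prime $2$ decomposes in $\Z_K$ with at least three distinct prime ideals of residue degree one, and then invoke the standard common index divisor obstruction: since $\F_2[x]$ contains only two monic linear polynomials ($x$ and $x+1$), no monic integer polynomial of degree $2^r$ can reduce modulo $2$ to a product having three distinct linear factors, so by Dedekind's theorem $2$ divides $(\Z_K:\Z[\th])$ for every $\th\in\Z_K$ generating $K$. Hence $K$ has no power integral basis and is not monogenic.

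Because $m$ is odd under either hypothesis, $\overline{F}=(x-1)^{2^r}$ in $\F_2[x]$, and the factorization of $2\Z_K$ is controlled by the $\ph$-Newton polygon of $F$ with respect to $\ph=x-1$. Setting $y=x-1$,
$$F(x)=(1-m)+\sum_{i=1}^{2^r-1}\binom{2^r}{i}\,y^i+y^{2^r},$$
and the Legendre/Kummer formula yields $\nu_2\bigl(\binom{2^r}{i}\bigr)=r-\nu_2(i)$ for $1\le i\le 2^r-1$. Consequently the points $(2^s,r-s)$ for $s=0,1,\dots,r$ lie on the lower convex hull and, barring interaction with $(0,\nu_2(1-m))$, join via ``binomial'' sides of slope $-1/2^s$, length $2^s$, ramification $e=2^s$, degree $d=1$ and residual polynomial $1+y\in\F_2[y]$ (since each ratio $\binom{2^r}{2^s}/2^{r-s}$ is odd); by Ore--Montes each binomial side then contributes exactly one prime of residue degree one.

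In case (1), $r=2$, the polygon has three sides of slopes $-(\nu_2(1-m)-2),\,-1,\,-\tfrac{1}{2}$ and lengths $1,1,2$, all of type $d=1$, so three distinct primes of residue degree one lie above $2$. In case (2), $r\ge 3$, write $v=\nu_2(1-m)\ge 5$. If $v\ge r+2$ no merging occurs and the polygon has $r+1\ge 4$ sides of residual degree one. Otherwise $(0,v)$ becomes collinear with a pair of consecutive binomial vertices, yielding a single merged side from $(0,v)$ to $(2^{t+1},v-2)$ of $d=2$, with $t=r-v+1\ge 0$; a short computation using that $(1-m)/2^v$, $\binom{2^r}{2^t}/2^{v-1}$ and $\binom{2^r}{2^{t+1}}/2^{v-2}$ are all odd shows the residual polynomial on this merged side is
$$R(y)=1+y+y^2\in\F_2[y],$$
which is irreducible. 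Hence the merged side contributes one prime of residue degree two, while the surviving binomial sides of slopes $-1/2^s$ for $t+1\le s\le r-1$ produce $r-1-t=v-2\ge 3$ further primes of residue degree one. In every sub-case at least three residue-degree-one primes lie above $2$, and the opening argument finishes the proof. The chief technical obstacle is this sub-case analysis, in particular verifying that the merged residual polynomial is always the irreducible $1+y+y^2$ (and not a reducible or inseparable quadratic) and that the surviving binomial count $v-2$ stays at least three across all admissible pairs $(r,v)$.
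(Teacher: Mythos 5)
Your proof is correct and follows essentially the same route as the paper: expand $F$ around $\ph=x-1$, use $\nu_2\bigl(\binom{2^r}{j}\bigr)=r-\nu_2(j)$ to read off the $\ph$-Newton polygon, count at least three sides of degree one (hence at least three primes of residue degree one above $2$), and conclude that $2$ is a common index divisor since $\F_2[x]$ has only two monic linear polynomials. The only difference is that you additionally analyze the possible merged first side and verify its residual polynomial is $1+y+y^2$, whereas the paper simply discards $S_1$ and counts the remaining $t-1\ge 3$ degree-one sides; your extra check is harmless (indeed it confirms $2$-regularity, making the appeal to Ore's theorem cleaner) but not needed for the conclusion.
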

\section{Proofs}
 We start by recalling  some fundamental notions on  Newton polygon's techniques. {For more details, we refer to \cite{El, EMN}}. 
 For any  prime integer  $p$ and for any  monic polynomial 
$\phi\in
\z[x]$ {\it whose reduction} is irreducible  in
$\fp[x]$, let $\fph$ be 
the field $\frac{\fp[x]}{(\overline{\phi})}$. For any
monic polynomial  $f(x)\in \z[x]$, upon  the euclidean division
 by successive powers of $\ph$, we  expand $f(x)$ as
$f(x)=\sum_{i=0}^la_i(x)\phi(x)^{i}$, called    the $\phi$-expansion of $f(x)$
 (for every $i$, $deg(a_i(x))<
deg(\phi)$). 
The $\ph$-Newton polygon of $f(x)$ with respect to $p$, is the lower boundary convex envelope of the set of points $\{(i,\nu_p(a_i(x))),\, a_i(x)\neq 0\}$ in the euclidean plane, which we denote by $\nph{f}$. For every $i\ne j=0,\dots,l$, let  $a_i=a_i(x)$ and $\mu_{ij}=\frac{\nu_p(a_{i})-\nu_p(a_{j})}{i-j}\in \Q$.  Then we obtain the following integers $0=i_0<i_1<\dots< i_r=l$ satisfying   $i_{j+1}=\mbox{ max }\{i=i_j+1,\dots l,\, \mu_{i_ji_{j+1}} \le \mu_{i_ji}\}$.  For every $j=1,\dots r$, let {$S_j$ be the segment joining the points $A_{j-1}=(i_{j-1}, \nu(a_{i_{j-1}}))$  and $A_j=(i_{j}, \nu(a_{i_{j}}))$} in the euclidean plane. The segments $S_1,\dots,S_r$ are called the sides of the polygon $\nph{f}$.  For every $j=1,\dots,r$, the rational  number $\la_j=\frac{\nu_p(a_{i_{j}})-\nu_p(a_{i_{j-1}})}{i_{j}-i_{j-1}}\in \Q$ is called the slope of $S_j$, $l(S_j)=i_{j}-i_{j-1}$ is its length, and  $h(S_j)=-\la_jl(S_j)$ is its height. In what follows   $\nu(a_{i_{j}})=\nu(a_{i_{j-1}})+l(S_{j})\la_{j}$. The $\ph$-Newton polygon of $f$, is the process of joining the segments $S_1,\dots,S_r$ ordered by  the increasing slopes, which  can be expressed as $\nph{f}=S_1+\dots + S_r$.  Notice that $
\nph{f}=S_1+\dots+S_r$ is only a notation and  not the sum in the euclidean plane. 
 For every side $S$ of the polygon $\nph{f}$, $l(S)$ is 
 the length of its projection to the $x$-axis and  $h(S)$  is the length of its projection to the $y$-axis. 
 {The principal part of $\nph{f}$,
 denoted $\npp{f}$, is the part of the  polygon $\nph{f}$, which is  determined by joining all sides of negative  slopes.}

 For every side $S$ of $\nph{f}$, with initial point $(s, u_s)$ and length $l$, and for every 
$i=0, \dots,l$, we attach   the following
{{\ residual coefficient} $c_i\in\fph$ as follows:
$$c_{i}=
\left
\{\begin{array}{ll} 0,& \mbox{ if } (s+i,{\it u_{s+i}}) \mbox{ lies strictly
above } S
 ,\\
\left(\dfrac{a_{s+i}(x)}{p^{{\it u_{s+i}}}}\right)
\,\,
\md{(p,\phi(x))},&\mbox{ if }(s+i,{\it u_{s+i}}) \mbox{ lies on }S.
\end{array}
\right.$$
where $(p,\phi(x))$ is the maximal ideal of $\z[x]$ generated by $p$ and $\ph$. That means if $(s+i,{\it u_{s+i}}) \mbox{ lies on }S$, then $c_i=\overline{\dfrac{a_{s+i}(\beta)}{p^{{\it u_{s+i}}}}}$, where $\beta$ is a root of $\ph$.}

Let  $\la=-h/e$ be the slope of $S$, where  $h$ and $e$ are two positive coprime integers, and let $d=l/e$ be the degree of $S$.  Notice that, 
the points  with integer coordinates lying in $S$ are exactly $(s,u_s),(s+e,u_{s}+h),\cdots, (s+de,u_{s}+dh)$. Thus, if $i$ is not a multiple of $e$, then 
$(s+i, u_{s+i})$ does not lie in $S$, and so, $c_i=0$. Let
$f_S(y)=t_0y^d+t_1y^{d-1}+\cdots+t_{d-1}y+t_{d}\in\fph[y]$, called  
the residual polynomial of $f(x)$ associated to the side $S$, where for every $i=0,\dots,d$,
 $t_i=c_{ie}$.
\begin{rem}
 Note that if $\nu(a_{s}(x))=0$, $\la=0$, and $\ph=x$, then $\fph=\F_p$ and for every $i=0,\dots,l$, $c_i=\overline{{a_{s+i}}} \md{p}$. Thus this notion of residual coefficient generalizes the reduction modulo a maximal ideal and $f_S(y)\in\F_p[y]$ coincides with the reduction of $f(x)$ modulo the maximal ideal $(p)$.
    \end{rem}
    Let $\npp{f}=S_1+\dots + S_r$ be the { principal} $\ph$-Newton polygon of $f$ with respect to $p$. We say that $f$ is a $\ph$-regular polynomial with respect to $p$, if for every  $i=1,\dots,r$, $f_{S_i}(y)$ is square free in $\fph[y]$. We say that $f$ is a $p$-regular polynomial if $f$ is  a $\ph_i$-regular polynomial with respect to $p$ for every $i=1,\dots,t$, for some monic polynomials $\ph_1,\dots,\ph_t$ with  $\overline{f(x)}=\prod_{i=1}^t\overline{\ph_i}^{l_i}$ is the factorization of $\overline{f(x)}$ in $\F_p[x]$.

The  theorem of Ore plays a fundamental key for proving our main Theorems:\\
  Let $\ph\in\Z[x]$ be a monic polynomial, with $\overline{\ph(x)}$ is irreducible in $\F_p[x]$. As defined in \cite[Def. 1.3]{EMN},   the $\ph$-index of $f(x)$, denoted by $ind_{\ph}(f)$, is  deg$(\ph)$ times the number of points with natural integer coordinates that lie below or on the polygon $\npp{f}$, strictly above the horizontal axis, {and strictly beyond the vertical axis} (see Figure 1).

\begin{figure}[htbp]

\centering

\begin{tikzpicture}[x=1cm,y=0.5cm]
\draw[latex-latex] (0,6) -- (0,0) -- (10,0) ;

\draw[thick] (0,0) -- (-0.5,0);
\draw[thick] (0,0) -- (0,-0.5);

\node at (0,0) [below left,blue]{\footnotesize $0$};

\draw[thick] plot coordinates{(0,5) (1,3) (5,1) (9,0)};
\draw[thick, only marks, mark=x] plot coordinates{(1,1) (1,2) (1,3) (2,1)(2,2)     (3,1)  (3,2)  (4,1)(5,1)  };

\node at (0.5,4.2) [above  ,blue]{\footnotesize $S_{1}$};
\node at (3,2.2) [above   ,blue]{\footnotesize $S_{2}$};
\node at (7,0.5) [above   ,blue]{\footnotesize $S_{3}$};
\end{tikzpicture}
\caption{    \large  $\npp{f}$.}
\end{figure}

 Now assume that $\overline{f(x)}=\prod_{i=1}^t\overline{\ph_i}^{l_i}$ is the factorization of $\overline{f(x)}$ in $\F_p[x]$, where every $\ph_i\in\Z[x]$ is monic, $\overline{\ph_i(x)}$ is irreducible in $\F_p[x]$, $\overline{\ph_i(x)}$ and $\overline{\ph_j(x)}$ are coprime when $i\neq j=1,\dots,t$.
For every $i=1,\dots,t$, let  $N_{\ph_i}^+(f)=S_{i1}+\dots+S_{ir_i}$ be the principal part of the $\ph_i$-Newton polygon of $f$ with respect to $p$. For every {$j=1,\dots, r_i$},  let $f_{S_{ij}}(y)=\prod_{s=1}^{s_{ij}}\psi_{ijk}^{a_{ijs}}(y)$ be the factorization of $f_{S_{ij}}(y)$ in $\F_{\ph_i}[y]$. 
  Then we have the following  theorem of Ore (see \cite[Theorem 1.7 and Theorem 1.9]{EMN}, \cite[Theorem 3.9]{El}, and \cite{O}):
 \begin{thm}\label{ore} (Theorem of Ore)
 \begin{enumerate}
 \item
  $\nu_p(ind(f))=\nu_p((\z_K:\z[\al]))\ge \sum_{i=1}^t ind_{\ph_i}(f)$ and  equality holds if $f(x)$ is $p$-regular;  every $a_{i,j,s}= 1$.
\item
If  every $a_{i,j,s}= 1$, then
$$p\Z_K=\prod_{i=1}^r\prod_{j=1}^{r_i}
\prod_{s=1}^{s_{ij}}\p^{e_{ij}}_{i,j,s},$$ where $e_{ij}$ is the ramification index
 of the side $S_{ij}$ and $f_{ijs}=\mbox{deg}(\ph_i)\times \mbox{deg}(\psi_{ijs})$ is the residue degree of $\p_{ijs}$ over $p$.
 \end{enumerate}
\end{thm}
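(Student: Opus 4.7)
The plan is to reduce the global statement to a local one at $p$, then establish the factorization and the index formula step by step using Newton polygons. Since $\nu_p((\z_K:\z[\al]))$ and the decomposition of $p\z_K$ are determined by the $p$-adic completion, we may replace $\z$ by $\z_p$ and $\z_K$ by $\z_K\otimes_{\z}\z_p=\prod_{\p\mid p}(\z_K)_{\p}$. This already splits the problem along the primes of $\z_K$ above $p$.

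First, apply Hensel's lemma to the coprime factorization $\overline{f(x)}=\prod_{i=1}^t\overline{\ph_i}^{l_i}$: it lifts to a factorization $f(x)=\prod_{i=1}^t f_i(x)$ in $\z_p[x]$ with $\overline{f_i(x)}=\overline{\ph_i}^{l_i}$ and $\z_p[x]/(f)=\prod_i\z_p[x]/(f_i)$. The $\ph_i$-Newton polygon of $f$ agrees with that of $f_i$, and the primes of $\z_K$ above $p$ whose residue field contains $\F_{\ph_i}$ are exactly those coming from the irreducible factors of $f_i$ over $\Q_p$. The statement therefore reduces to proving, for a single $f_i$, the inequality $\nu_p((\z_{K_i}:\z_p[\al_i]))\ge ind_{\ph_i}(f_i)$ together with the claimed prime decomposition, and then summing over $i$.

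Next, decompose $f_i$ using its $\ph_i$-Newton polygon. A refined Hensel-type lifting (the classical lemma of Ore) peels off each side $S_{ij}$ of slope $\la_{ij}=-h_{ij}/e_{ij}$ into a factor $g_{ij}(x)\in\z_p[x]$ whose $\ph_i$-Newton polygon is the single side $S_{ij}$; the roots of $g_{ij}$ then have $\ph_i$-adic valuation $h_{ij}/e_{ij}$, which forces $e_{ij}$ to divide the ramification index at every prime arising from $g_{ij}$. A second lifting, governed by the residual polynomial $f_{S_{ij}}(y)\in\F_{\ph_i}[y]$, refines $g_{ij}=\prod_s h_{ijs}(x)$ according to the factorization $f_{S_{ij}}(y)=\prod_s\psi_{ijs}(y)^{a_{ijs}}$. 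Each $h_{ijs}$ is irreducible over $\Q_p$ exactly when $a_{ijs}=1$, and in that case the local field it defines has ramification index $e_{ij}$ and residue degree $\deg(\ph_i)\deg(\psi_{ijs})$, which gives the primes $\p_{ijs}$ of part~(2).

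Finally, the index bound follows from a lattice-point count: for a polynomial with single-sided $\ph$-Newton polygon, the $p$-adic length of the quotient of the maximal order by $\z_p[\al]$ equals $\deg(\ph)$ times the number of integer points strictly below the side, strictly above the $x$-axis, and strictly beyond the vertical axis, plus a non-negative contribution coming from the next-level Newton polygons of the $h_{ijs}$. Adding these over all sides and all $i$ yields $\nu_p((\z_K:\z[\al]))\ge\sum_i ind_{\ph_i}(f)$, with equality precisely when each $h_{ijs}$ has trivial further structure, i.e.\ when every $a_{ijs}=1$. The principal obstacle throughout is the refined Hensel lifting attached to a side of the Newton polygon and to each factor of its residual polynomial; this is a delicate manipulation of $\ph$-adic expansions in $\z_p[x]$ and is the technical heart of the Ore/Montes framework developed in \cite{EMN,El}.
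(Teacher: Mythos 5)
This statement is the classical Theorem of Ore; the paper does not prove it but quotes it with references to \cite{EMN}, \cite{El} and \cite{O}, so there is no internal proof to compare against. Your outline follows the standard Ore--Montes route found in exactly those references (localize at $p$, Hensel-split along the coprime factors $\overline{\ph_i}^{l_i}$, split each piece along the sides of its $\ph_i$-Newton polygon, then along the irreducible factors of the residual polynomials, and count lattice points for the index), and as a roadmap it is sound. But it is a roadmap, not a proof: the two lifting lemmas you call ``the technical heart'' --- the theorem of the polygon and the theorem of the residual polynomial --- are precisely where all the work lies, and you defer them entirely to \cite{EMN,El}. So the proposal does not establish the statement independently; it reproduces the architecture of the cited proof.

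Two specific claims in your sketch are also inaccurate. First, ``$h_{ijs}$ is irreducible over $\Q_p$ exactly when $a_{ijs}=1$'' is false in the ``only if'' direction: when $a_{ijs}>1$ the factor $h_{ijs}$ may still be irreducible (e.g.\ $x^2-2x-11$ over $\Q_2$ with $\ph=x-1$ has residual polynomial $(y+1)^2$ yet is irreducible); this is the very reason higher-order Newton polygons are needed. Only the implication $a_{ijs}=1\Rightarrow h_{ijs}$ irreducible is true, and that is all part (2) requires. Second, the index count uses the lattice points lying \emph{on or below} the principal polygon (strictly above the horizontal axis and strictly beyond the vertical axis), as in the paper's definition of $ind_{\ph}(f)$; your ``strictly below the side'' would undercount, e.g.\ it gives $0$ instead of $1$ for $x^2-p^2m$ with $\ph=x$.
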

\begin{cor}\label{ind}
 Under the hypothesis of Theorem \ref{ore}, if for every $i=1,\dots,t$, $l_i=1$ or $N_{\ph_i}(f)=S_i$ has a single side of height $1$, then $\nu_p(ind(f))=0$.
\end{cor}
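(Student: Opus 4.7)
The plan is to reduce everything to Theorem \ref{ore}(1) by checking, for each $i$, that $f$ is $\phi_i$-regular and that $ind_{\phi_i}(f)=0$; summing over $i$ then yields $\nu_p(ind(f))=\sum_{i=1}^t ind_{\phi_i}(f)=0$. So fix $i$ and split according to the two allowed configurations.

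First, suppose $l_i=1$. Then the principal $\phi_i$-Newton polygon has total horizontal length $l_i=1$, so it is a single side $S$ from $(0,\nu_p(a_0))$ to $(1,0)$ of length $l(S)=1$. Writing its slope in lowest terms as $-h/e$, the divisibility $e\mid l(S)=1$ forces $e=1$ and hence $d=l(S)/e=1$. Thus $f_S(y)\in\F_{\phi_i}[y]$ is linear and automatically square-free, so $f$ is $\phi_i$-regular. Moreover, the only lattice point on $S$ with $x\ge 1$ is $(1,0)$, which sits on the horizontal axis; consequently no point with $x\ge 1$ and $y\ge 1$ lies on or below $S$, giving $ind_{\phi_i}(f)=0$.

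Next, suppose $N_{\phi_i}(f)=S$ is a single side of height $1$. Then $S$ runs from $(0,1)$ to $(l_i,0)$ with slope $-1/l_i$, so $h=1$, $e=l_i$ and again $d=l(S)/e=1$. Hence $f_S(y)$ is linear and square-free, so $f$ is $\phi_i$-regular. For any $x\ge 1$, the height of $S$ is at most $1-1/l_i<1$, so no lattice point with $y\ge 1$ lies on or below $S$; this gives $ind_{\phi_i}(f)=0$.

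Both cases deliver $\phi_i$-regularity together with vanishing $\phi_i$-index, so $f$ is $p$-regular and Theorem \ref{ore}(1) closes the argument. There is no genuine obstacle here: the entire proof is a direct unwrapping of the definitions of the residual polynomial and of $ind_{\phi_i}(f)$, and the main thing to be careful about is the lattice-point count showing that a segment of horizontal length $1$ or of vertical height $1$ contains no integer point in the half-open region $\{x\ge 1,\ y\ge 1\}$ that is weakly below the polygon.
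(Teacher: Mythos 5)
Your proof is correct and takes essentially the approach the paper leaves implicit, since the corollary is stated there without proof: in both cases each side of the principal polygon has degree one, so every residual polynomial is linear (hence square-free, giving $p$-regularity), and the lattice-point count shows each $ind_{\phi_i}(f)=0$, whence the equality in Theorem \ref{ore}(1) yields $\nu_p(ind(f))=0$. The only points worth flagging are cosmetic: in the case $l_i=1$ you should note that $\nu_p(a_0)\ge 1$ precisely because $\overline{\phi_i}$ divides $\overline{f}$, and the endpoint $(l_i,0)$ in the height-one case uses that the length of the principal part equals the multiplicity $l_i$; both are standard and do not affect the argument.
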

The following lemma allows to evaluate  the $p$-adic valuation of the binomial coefficient $\binom{p^r}{j} $. 
\begin{lem} \label{binomial}
	Let $p$ be a rational prime integer and $r$ be a positive integer. Then $ \nu_p\left(\binom{p^r}{j}\right)  =  r - \nu_p(j)$
	for any integer $j= 1,\dots,p^r-1 $. 
\end{lem}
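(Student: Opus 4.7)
The plan is to reduce to a simple identity among binomial coefficients and then show that an auxiliary binomial coefficient is a unit mod $p$. Specifically, I would start from the elementary identity
\[
j \binom{p^r}{j} = p^r \binom{p^r-1}{j-1},
\]
valid for $1 \le j \le p^r-1$. Taking $p$-adic valuations gives
\[
\nu_p(j) + \nu_p\!\left(\binom{p^r}{j}\right) = r + \nu_p\!\left(\binom{p^r-1}{j-1}\right),
\]
so the lemma is reduced to proving the single claim
\[
\nu_p\!\left(\binom{p^r-1}{k}\right) = 0 \qquad \text{for every } 0 \le k \le p^r-1.
\]

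The main (and essentially only) content is this claim, which I would establish by a direct cancellation argument. Writing
\[
\binom{p^r-1}{k} \;=\; \prod_{i=1}^{k} \frac{p^r - i}{i},
\]
I would fix $i$ with $1 \le i \le p^r-1$ and set $a = \nu_p(i)$, so that $0 \le a < r$ and $i = p^a b$ with $\gcd(b,p)=1$. Then
\[
p^r - i \;=\; p^a\bigl(p^{r-a} - b\bigr),
\]
and since $p \mid p^{r-a}$ but $p \nmid b$, we have $p \nmid (p^{r-a}-b)$, hence $\nu_p(p^r-i) = a = \nu_p(i)$. Thus each factor $(p^r-i)/i$ in the product is a $p$-adic unit, proving the claim. (Alternatively, Lucas' theorem applied to $p^r-1 = \overline{(p-1)(p-1)\cdots(p-1)}_p$ gives the same conclusion, since every base-$p$ digit of $k$ is then at most $p-1$ and each $\binom{p-1}{k_i}$ is nonzero modulo $p$.)

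Combining these two steps yields $\nu_p\!\binom{p^r}{j} = r - \nu_p(j)$, as required. The only potential pitfall is to keep the range of $j$ controlled so that $\nu_p(j) < r$ (guaranteed by $1 \le j \le p^r-1$) and so the identity makes sense; everything else is a short manipulation. I expect no serious obstacle.
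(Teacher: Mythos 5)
Your proof is correct. It is worth noting that its computational heart is the same as the paper's: both arguments ultimately rest on the observation that $\nu_p(p^r-t)=\nu_p(t)$ for every $t$ with $1\le t\le p^r-1$ (since $t=p^ab$ with $p\nmid b$ and $a<r$ forces $p^r-t=p^a(p^{r-a}-b)$ with $p\nmid(p^{r-a}-b)$). The packaging differs, however. The paper expands $\nu_p\bigl(\binom{p^r}{j}\bigr)$ via $\nu_p(m!)=\sum_{t=1}^{m}\nu_p(t)$, re-indexes the top sum to isolate $\nu_p(p^r)=r$, and cancels the remaining terms pairwise using that observation. You instead peel off the factor $p^r/j$ by the absorption identity $j\binom{p^r}{j}=p^r\binom{p^r-1}{j-1}$ and then show the auxiliary coefficient $\binom{p^r-1}{k}$ is a $p$-adic unit by writing it as $\prod_{i=1}^{k}(p^r-i)/i$; this is exactly the paper's pairwise cancellation, repackaged as a product of units. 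Your route has the small advantage of isolating a clean, reusable statement ($\binom{p^r-1}{k}$ is prime to $p$, which also follows from Lucas as you note), while the paper's is self-contained with no auxiliary identity. There is no gap in either; your remark that $1\le j\le p^r-1$ guarantees $\nu_p(j)<r$ is the right sanity check, though the identity itself holds regardless.
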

\begin{proof}
Since for any natural number $m$, $\nu_p( m!)= \sum_{t=1}^{m} \nu_p(t)$, 
we have $ \nu_p \left( \binom{p^r}{j} \right) = \nu_p(p^r ! ) - \nu_p((p^r-j)!) -\nu_p(j!) =  \sum_{t=1}^{p^r} \nu_p(t) - \sum_{t=1}^{p^r - j } \nu_p(t) - \sum_{t=1}^{j} \nu_p(t)$. Thus 
 $ \nu_p \left( \binom{p^r}{j} \right)
 = \sum_{t= p^r - j +1}^{p^r } \nu_p(t) - \sum_{t=1}^{j} \nu_p(t) = \nu_p(p^r) + \sum_{t=1}^{j-1} \nu_p(p^r - t) - \sum_{t=1}^{j} \nu_p(t) $. 
As for every $ t=1,\dots, j-1 < p^r $, then $\nu_p(p^r - t ) = \nu_p(t)$. Hence $ \nu_p    \left(  \binom{p^r}{j} \right) = r - \nu_p(j)$.
\end{proof}
\begin{proof} of Theorem \ref{pib}.\\
 Since $\triangle(f)  =\mp p^{rp^r} m ^{p^r - 1}$, then by the  formula linking $\triangle(f)$, $ (\Z_K: \Z[\al])$ and $d_K$, $\mathbb{Z}[\alpha]$ is integrally closed if and only if   $q$ does not divide $(\Z_K: \Z[\al])$ for every rational prime integer $q$ dividing $pm$.  Let $q$ be a rational prime dividing $m$, then $ f(x) \equiv \phi^{p^r}  (\ mod \ q )$, where $\phi = x $. As $m$ is a square free integer, the $\phi$-principal Newton polygon with respect to $\nu_q$,  $ \npp {f} = S $ has a single side of degree $1$. Thus $f_{S}(y)$ is irreducible over $\mathbb{F}_\phi \simeq \mathbb{F}_q$. By Theorem \ref{ore}, we get $\nu_q((\Z_K: \Z[\al])) = ind_{\ph}(f) = 0 $; $q$ does not divide $(\Z_K: \Z[\al])$. Now, we deal with $p$, when $p$ does not divide $m$. In this case, let $ \nu = v_p(m^{p-1} - 1)$.  Since
 $ f(x) = ( x - m+m)^{p^r} - m =(x-m)^{p^r}   + \sum_{j=1}^{p^r - 1} \binom{p^r}{j} m^{p^r - j} (x-m)^j + m^{p^r}  - m $, then by Lemma \ref{binomial},  $ f(x) \equiv \phi^{p^r}  ( mod \ p )$, where $\phi = x -m$. If $ \nu =1$,  then $\npp {f} = S $ has a single side of height $1$ and by Theorem \ref{ore}, $ p $ does not divide $ (\Z_K: \Z[\al])$.  But if $\nu\ge 2$, then  $ p $  divides $(\Z_K: \Z[\al])$. 
\end{proof} 
 For the proof of Theorem \ref{npibodd}, Corollary \ref{mono3}, and Theorem \ref{mono2}, we need the following Lemma, which is an immediate consequence of a theorem of Kummer.
 \begin{lem} \label{comindex}
 Let  $p$ be  rational prime integer and $K$ be a number field. For every positive integer $f$, let $P_f$ be the number of distinct prime ideals of $\Z_K$ lying above $p$ with residue degree $f$ and $N_f$ be the number of monic irreducible polynomials of  $\F_p[x]$ of degree $f$.
  If $ P_f > N_f$, then $p$ is a common  index divisor of $K$.
\end{lem}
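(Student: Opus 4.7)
The plan is to prove the contrapositive. Suppose $p$ is not a common index divisor of $K$; then by the very definition of $i(K)$ there exists a generator $\theta \in \Z_K$ of $K/\Q$ whose index $(\Z_K : \Z[\theta])$ is coprime to $p$. This is the one substantive reduction: trading the universal ``for every generator'' in the definition of $i(K)$ for the existence of a single good $\theta$, which is exactly what unlocks the applicability of the Dedekind--Kummer theorem.

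With such a $\theta$ in hand, let $g(x) \in \Z[x]$ be its minimal polynomial and factor $\overline{g(x)} = \prod_{i=1}^{t} \overline{\phi_i}(x)^{e_i}$ into pairwise coprime monic irreducibles in $\F_p[x]$. The condition $p \nmid (\Z_K : \Z[\theta])$ is precisely the hypothesis required to invoke the theorem of Dedekind--Kummer, which then produces the factorization $p\Z_K = \prod_{i=1}^{t} \p_i^{e_i}$ with the residue degree of $\p_i$ above $p$ equal to $\deg(\phi_i)$. The assignment $\p_i \mapsto \overline{\phi_i}$ is therefore a degree-preserving injection from the primes of $\Z_K$ lying above $p$ into the set of monic irreducible polynomials of $\F_p[x]$. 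Restricting this injection to primes of residue degree $f$ yields $P_f \le N_f$, contradicting the hypothesis $P_f > N_f$.

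The main (and only real) obstacle is conceptual rather than computational: one has to recognize that a pigeonhole argument on the finite set of monic irreducible polynomials of degree $f$ in $\F_p[x]$ is what forbids a Dedekind--Kummer-style factorization when $P_f > N_f$, and that this in turn rules out the existence of any $\theta \in \Z_K$ generating $K$ with $p \nmid (\Z_K : \Z[\theta])$, forcing $p$ to divide $i(K)$.
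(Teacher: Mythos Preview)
Your proof is correct and follows exactly the route the paper indicates: the paper does not write out a proof of this lemma but merely states it is ``an immediate consequence of a theorem of Kummer,'' and your argument is precisely the standard unpacking of that remark via the contrapositive and the Dedekind--Kummer factorization. There is nothing to add.
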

 \begin{proof} of Theorem \ref{npibodd}.\\
 Under the hypothesis of Theorem \ref{npibodd}, $\overline{F(x)}=(x-m)^{p^r}$ in $\F_p[x]$ and $ F(x) = ( x - m+m)^{p^r} - m =(x-m)^{p^r}   + \sum_{j=1}^{p^r - 1} \binom{p^r}{j} m^{p^r - j} (x-m)^j + m^{p^r}  - m $.  Let $\phi = x -m$ and $\nu=\nu_p (m^{p^r}-m)$. As $m^{p^r}-m=(m^{p}-m)(m^t+\dots+1)$, where $t=p^r-1$,  if $m\equiv 1 \md{p^{\nu}}$, then $m^t+\dots+1\equiv -1\md{p}$, and so, $\nu_p(m^t+\dots+1)=0$. Thus $\nu_p(m^{p-1}-1)=\nu_p(m^{p}-m)=\nu_p(m^{p^r}-m)$. It follows that if $ \nu >p$ and $r\ge p$, then by Lemma \ref{binomial}, $\npp{f}= S_1+\dots+S_{t-p+1}+\dots+S_{t}$ has $t$-distinct sides of degree $1$  each one, with $t\ge p+1$. More precisely, $S_{t-i}$ has $(p^{r-i-1},i+1)$ as the first point and $(p^{r-i},i)$ as the last point for every $i=0,\dots,p-1$,  and $S_1$ has $(0, \nu )$ as the first point and $(1,r)$ or $ (p^{r-\nu+1}, \nu - 1)$ as the last point according to $\nu > r $ or  $ r \geq \nu$.  It follows  by Theorem \ref{ore} that, there are $t$ distinct prime ideals of $\Z_K$ lying above $p$,with residue degree $1$ each one. As {{ $t = P_1 > p=N_1$}}, by Lemma \ref{comindex}, $p$ is a common index divisor of $K$ and $K$ is not monogenic.
   \end{proof}
   \begin{proof} of Theorem \ref{mono2}.\\
   Let $m$ be an odd rational integer. Then  $\overline{F(x)}=(x-1)^{p^r}$ in $\F_p[x]$ and $ F(x) = ( x - 1+1)^{p^r} - m =(x-1)^{p^r}   + \sum_{j=1}^{p^r - 1} \binom{p^r}{j}  (x-1)^j + 1 - m $.  Let $\phi = x -1$ and $\nu=\nu_p (1-m)$. It follows that:
   \begin{enumerate}
   \item
   If $r=2$ and $\nu\ge 4$, then by Lemma \ref{binomial}, {$\npp{f}=S_1+S_2+S_3$}  has $3$-distinct sides, with degree $1$ each side. Thus there are $3$-distinct prime ideals of $\Z_K$ lying above $2$ with residue degree $1$ each one. So, by Lemma \ref{comindex}, $2$ is a common index divisor of $K$.
   \item
   If $r\ge 3$ and $\nu\ge 5$, then by Lemma \ref{binomial}, $\npp{f}=S_1+S_2+\dots+S_t$  has  $t$-distinct sides, with $t\ge 4$ and  degree $S_i$ equals $1$  for every  $i=2,\dots,t$. Thus there are at least $t-1$-distinct prime ideals of $\Z_K$ lying above $2$ with residue degree $1$ each one. As $t-1\ge 3$, then by Lemma \ref{comindex}, $2$ is a common index divisor of $K$.
      \end{enumerate}
    \end{proof}
\begin{examps} 
	\begin{enumerate}
	\item 
	{ If $ \overline{m} \not\in\{ \overline{1}, \overline{18}, \overline{19}, \overline{30}, \overline{31}, \overline{48}\} \md { 49} $, then the pure number field   $\mathbb{Q}  (\sqrt[7^r]{m} ) $ is monogenic for every positive integer $r$. }
\item { If $ m^{p-1} \equiv 1  \md { p^{p+1}}$, then for every natural integer $r\geq p$ the pure field $\mathbb{Q}  ({\alpha} ) $ is not monogenic.  In particular, if $m^4\equiv 1 \md{5^6}$, then the pure field $\mathbb{Q}  (\sqrt[5^r]{m} )  $ is not monogenic, for every natural integer $r\geq  5$.}
\item { If $m\equiv 1\md{32}$, then the octic number field  $\mathbb{Q}   (\sqrt[8]{m} ) $ is not monogenic  }.
\item  More general, if $m\equiv 1\md{32}$, then for every natural integer $r\ge2$, $\mathbb{Q}  ({\alpha} ) $ is not monogenic, where $\alpha$ is a complex root of $x^{2^r}-m$.
\item { If $\nu_3(m^2-1) \geq 4 $, the field  $\mathbb{Q}  (\sqrt[27]{m} ) $ is not monogenic. ( see figure $2$ )}.
\item  { More general, if $m\equiv \mp 1\md{81}$, then the field  $\mathbb{Q}  ({\alpha} ) $ is not monogenic, where $\alpha$ is a complex root of $x^{3^r}-m$ and $r\ge 3$ is a natural integer.}
 \end{enumerate}
		\end{examps}
\centering
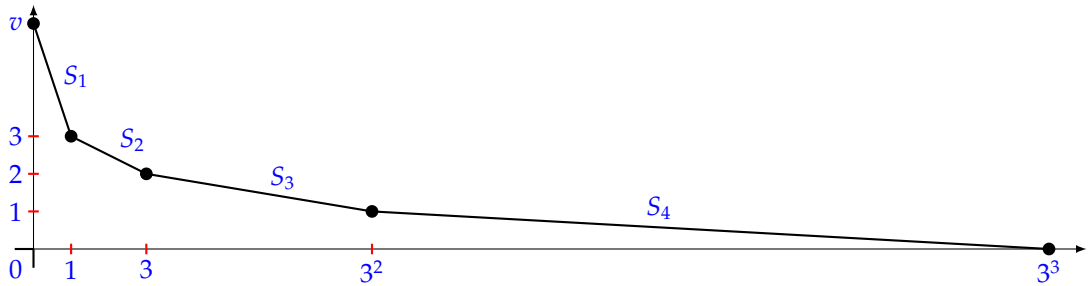
\begin{figure}[htbp] 
\begin{tikzpicture}[x=0.5cm,y=0.5cm]
\draw[latex-latex] (0,6.5) -- (0,0) -- (28,0) ;
\draw[thick] (0,0) -- (-0.5,0);
\draw[thick] (0,0) -- (0,-0.5); 
\draw[thick,red] (1,-2pt) -- (1,2pt);
\draw[thick,red] (3,-2pt) -- (3,2pt);
\draw[thick,red] (9,-2pt) -- (9,2pt);
\draw[thick,red] (-2pt,1) -- (2pt,1);
\draw[thick,red] (-2pt,2) -- (2pt,2);
\draw[thick,red] (-2pt,3) -- (2pt,3);
\node at (0,0) [below left,blue]{\footnotesize  $0$};
\node at (1,0) [below ,blue]{\footnotesize  $1$};
\node at (3,0) [below ,blue]{\footnotesize $3$};
\node at (9,0) [below ,blue]{\footnotesize  $3^{2}$};
\node at (27,0) [below ,blue]{\footnotesize  $3^{3}$};
\node at (0,1) [left ,blue]{\footnotesize  $1$};
\node at (0,2) [left ,blue]{\footnotesize  $2$};
\node at (0,3) [left ,blue]{\footnotesize  $3$};
\node at (0,6) [left ,blue]{\footnotesize  $v$};
\draw[thick,mark=*] plot coordinates{(0,6) (1,3)(3,2) (9,1) (27,0)};
\node at (0.5,4) [above right  ,blue]{\footnotesize  $S_{1}$};
\node at (2,2.3) [above right  ,blue]{\footnotesize  $S_{2}$};
\node at (6,1.3) [above right  ,blue]{\footnotesize  $S_{3}$};
\node at (16,0.5) [above right  ,blue]{\footnotesize  $S_{4}$};
\end{tikzpicture}
\caption{    \large  The $(x-m)$-Newton polygon for $F(x)=x^{27}-m$  at $p=3$  and $v\geq 4$.\hspace{5cm}}
\end{figure}

\end{document}